\tolerance=2000
\documentclass[12pt, final]{amsart}
\usepackage[cp850]{inputenc}
\usepackage{amssymb}
\usepackage[mathscr]{eucal}
\usepackage{amscd}
\setlength{\oddsidemargin}{0.5cm} \setlength{\evensidemargin}{1cm}
\setlength{\textwidth}{15cm}

\newcommand{\R}{\mathbb R}
\newcommand{\N}{\mathbb N}

\theoremstyle{plain}
\newtheorem{theorem}{Theorem}

\newtheorem{prop}{Proposition}
\newtheorem{proposition}{Proposition}
\newtheorem{lemma}{Lemma}
\newtheorem{cor}{Corollary}

\theoremstyle{remark}

\newtheorem{defn}[theorem]{Definition}
\newcommand{\adef}{\begin{defn}}
\newcommand{\zdef}{\end{defn}}

\title{On nested sequences of convex sets in a Banach space}
\thanks{The research of the first two authors was realized during a visit to the University of Bologna, supported in part
by project MTM2010-20190. The research of the first author was
supported in part by the program Junta de Extremadura GR10113 IV
Plan Regional I+D+i, Ayudas a Grupos de Investigaci\'on,}
\thanks{}
\author{Jes\'us M. F. Castillo}
\author{Manuel Gonz\'alez }
\author{Pier Luigi Papini}
\address{Departamento de Matem\'aticas, Universidad de Extremadura, Avda de elvas s/n, 06071 Badajoz,
Espa\={n}a; Departamento de Matem\'aticas, Universidad de
Cantabria, Avda de los Castros s/n, 39071 Santander, Espa\={n}a;
via Martucci, 19, 40136 Bologna, Italia}

\begin{document}
\maketitle
\begin{abstract} In this paper we study different aspects of the representation
of weak*-compact convex sets of the bidual $X^{**}$ of a separable
Banach space $X$ via a nested sequence of closed convex bounded
sets of $X$. \end{abstract}

\section{Introduction}

In this paper we solve several problems about nested intersections
of convex closed bounded sets in Banach spaces.

We begin with a study of different aspects of the representation
of weak*-compact convex sets of the bidual $X^{**}$ of a separable
Banach space $X$ via a nested sequence of closed convex bounded
sets of $X$. Precisely, let us say that a convex closed bounded
subset $C\subset X^{**}$ is \emph{representable} if it can be written as
the intersection
$$C = \bigcap_{n\in \N} \overline{C_n}^{w^*}$$
of a nested sequence $(C_n)$ of bounded convex closed subsets of
$X$. This topic was considered in \cite{castpapi,limit}, where the
problem of which weak*-closed convex sets of the bidual are
representable was posed. In \cite{berna}, Bernardes shows that
when $X^*$ is separable every weak*-compact convex subset of
$X^{**}$ is representable. Here we will show that compact convex
sets of $X^{**}$ are representable if and only if $X$ does not
contain $\ell_1$ and also that there are spaces without copies of
$\ell_1$ containing weak*-compact convex metrizable subsets of the bidual
not representable.

In Section 3, we solve problem (2) in \cite{limit} showing that
when the sets are viewed as the distance type (in the sense of
\cite{castpapi}) they define, i.e., as elements of $\R^X$, then
every weak*-compact convex set $C\subset X^{**}$ is represented
by a nested sequence $(C_n)$ of closed convex sets of $X$; which
means that for all $x\in X$
$$\mathrm{dist} (x, C) = \lim \mathrm{dist} (x, C_n).$$

In Section 4 we present two examples: the first one solves
Marino's question \cite{mari} about the possibility of enlarging
nested sequences of convex sets to get better intersections; the
second one solves Behrends' question about the validity of
$\varepsilon=0$ in the Helly-B\'ar\'any theorem \cite{behr}.

\section{Representation of convex sets in biduals}

\adef A Banach space is said to enjoy the Convex Representation
Property, in short CRP (resp. Compact Convex Representation
Property, in short CCRP) if every weak*-compact (resp. compact)
convex subset $C$ of $X^{**}$ can be represented as the
intersection
$$C = \bigcap_{n\in \N} \overline{C_n}^{w^*}$$
of a nested sequence $(C_n)$ of bounded convex closed subsets of
$X$. \zdef

\begin{prop} A separable Banach space has CCRP if and only if it does not contain
$\ell_1$.
\end{prop}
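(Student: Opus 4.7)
The two implications are treated separately, with sufficiency being the harder direction.

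\emph{Necessity.} I argue by contrapositive. Suppose $\ell_1\subset X$. By the Odell--Rosenthal theorem there exists $x^{**}\in X^{**}$ that is not the weak$^*$-limit of any sequence in $X$. The singleton $K=\{x^{**}\}$ is norm-compact and convex; I claim it is not representable. Indeed, if $\{x^{**}\}=\bigcap_n\overline{C_n}^{w^*}$ with $(C_n)$ a nested sequence of bounded closed convex subsets of $X$, pick any $y_n\in C_n$. Then $(y_n)$ is a bounded sequence in $X$ contained in the weak$^*$-compact Hausdorff set $\overline{C_1}^{w^*}$; since $y_n\in C_m$ for $n\ge m$, every weak$^*$-cluster point of $(y_n)$ lies in $\overline{C_m}^{w^*}$ for every $m$, hence in $\{x^{**}\}$. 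A sequence in a compact Hausdorff space with a unique cluster point converges to it, so $(y_n)$ converges to $x^{**}$ in the weak$^*$-topology, contradicting the choice of $x^{**}$.

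\emph{Sufficiency.} Assume $X$ is separable with $\ell_1\not\subset X$ and let $K\subset X^{**}$ be norm-compact and convex. The plan is to produce a nested sequence $(C_n)$ of bounded closed convex subsets of $X$ with $\bigcap_n\overline{C_n}^{w^*}=K$. Two tools are available: (i) by Odell--Rosenthal every element of $X^{**}$ is the weak$^*$-limit of a norm-bounded sequence in $X$; and (ii) since the weak$^*$-topology is Hausdorff and weaker than the norm topology while $K$ is norm-compact, the two topologies coincide on $K$, so $K$ is weak$^*$-metrizable and its weak$^*$-topology is generated by a countable family $(f_j)\subset B_{X^*}$. Choosing a norm-dense sequence $(x^{**}_m)$ in $K$ and, for each $m$, a uniformly bounded sequence $(y^m_k)\subset X$ weak$^*$-converging to $x^{**}_m$, a diagonal refinement ensures $|f_j(y^m_k-x^{**}_m)|<1/n$ whenever $j\le n$ and $k\ge N(n,m)$, with $N$ nondecreasing in $n$. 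I then set
\[
C_n=\overline{\operatorname{conv}}\bigl\{y^m_k:m\in\N,\,k\ge N(n,m)\bigr\}\subset X.
\]
This family is nested and uniformly bounded, and the inclusion $K\subset\overline{C_n}^{w^*}$ follows from the weak$^*$-convergence of the tails $(y^m_k)_{k\ge N(n,m)}$ together with the norm density of $(x^{**}_m)$ in $K$.

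The main obstacle is the reverse inclusion $\bigcap_n\overline{C_n}^{w^*}\subset K$. For $z^{**}$ in this intersection, the weak$^*$-continuity of each $f_j$ combined with the controlled approximation yields $f_j(z^{**})\in f_j(K)+[-1/n,1/n]$ whenever $j\le n$, and passing to the limit gives $f_j(z^{**})\in f_j(K)$ for every $j$. A finite-intersection argument on the compact convex set $K$ then produces some $k\in K$ with $f_j(k)=f_j(z^{**})$ for every $j$. Concluding $z^{**}=k\in K$ requires $(f_j)$ to separate $z^{**}-k$ from $0$, which is the most delicate step; one handles it by enlarging $(f_j)$ (while keeping it countable) so as to separate a suitable separable subspace of $X^{**}$ containing $K$ and the relevant weak$^*$-closures, using that $X^*$ separates the points of $X^{**}$.
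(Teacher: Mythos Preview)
Your necessity argument is correct and matches the paper's.

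For sufficiency there is a genuine gap at precisely the point you flag as ``most delicate''. You obtain $k\in K$ with $f_j(k)=f_j(z^{**})$ for every $j$ and then want $z^{**}=k$. This requires the countable family $(f_j)$ to separate $z^{**}$ from $k$; but $z^{**}$ is an \emph{arbitrary} element of $\bigcap_n\overline{C_n}^{w^*}$, and that intersection need not lie in any norm-separable subspace of $X^{**}$. Already $\overline{C_1}^{w^*}$ can be as large as a ball of $X^{**}$, which is non-separable whenever $X^*$ is (take $X=JT$, the very example the paper treats later). No countable subset of $X^*$ separates the points of such a set, so your fix of ``enlarging $(f_j)$ while keeping it countable'' cannot work; moreover it is circular, since the sets $C_n$---hence their weak$^*$-closures and the candidate $z^{**}$---already depend on $(f_j)$ through $N(n,m)$.

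The paper sidesteps this by exploiting the \emph{norm} compactness of $K$ rather than a weak$^*$ description. One chooses finite $n^{-1}$-nets $F_n\subset K$ with $F_n\subset F_{n+1}$, uses Odell--Rosenthal to pick for each $z_n^k\in F_n$ a sequence $(x_n^k(m))_m\subset X$ weak$^*$-converging to $z_n^k$, and sets
\[
C_n=\overline{\operatorname{conv}}\{\,x_n^k(m):k\in I_n,\ m\ge n\,\}+n^{-1}B_X.
\]
The finiteness of $F_n$ together with the padding by $n^{-1}B_X$ is what gives, for any $p\in\bigcap_n\overline{C_n}^{w^*}$, a \emph{norm} estimate of the form $\|p-\sum_{i\in I_n}\theta_i z_n^i\|\le n^{-1}$ with $\sum_i\theta_i z_n^i\in\operatorname{conv}(F_n)\subset K$, forcing $p\in K$. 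The moral: for the reverse inclusion you need norm control coming from the compactness of $K$, not control by countably many functionals.
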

\begin{proof} The necessity follows from the Odell-Rosenthal characterization
\cite{odell} of separable Banach spaces containing $\ell_1$.
Indeed, if $X$ contains $\ell_1$ then there is an element $\mu\in
X^{**}$ which is not the weak*-limit of any sequence of elements
of $X$. Hence, $\{\mu\}=\bigcap_{n \in \N} \overline{C_n}^{w^*}$
is impossible: taking an element $c_n\in C_n$ one would get
$\emptyset \neq \bigcap_n \overline{\{c_k: k\geq n\}}^{w^*} \subset \bigcap_n \overline{C_n}^{w^*}  = \{\mu\}$, which means that $\mu$ is the only weak*-cluster point of the sequence $(c_n)$ and thus $\mu =
w^*-\lim c_n$.

As for the sufficiency, let $K$ be a compact convex subset of
$X^{**}$. For every $n\in \N$, let $F_n=\{z_n^k : k\in I_n \}$ be
a finite subset of $K$ for which $K\subset F_n +
n^{-1}B_{X^{**}}$. There is no loss of generality assuming that
$F_n\subset F_{n+1}$. For each $z_n^k\in F_n$, let
$(x_n^k(m))_m\subset X$ be a sequence in $X$ weak*-convergent to
$z_n^k$. Set

$$C_n = \overline{conv}\{ x_n^k(m) , k\in I_n, m\geq n \}+ n^{-1}B_X$$

It is clear that $C_n$ is a nested sequence of closed convex sets
of $X$. Moreover,

$$K\subset F_n + n^{-1}B_{X^{**}} \subset \overline{\overline{conv}\{ x_n^k(m) , k\in I_n, m\geq n\}+ n^{-1}B_X}^{w^*} =\; \overline{C_n}^{w^*},$$
and thus $K\subset \bigcap_n \overline{C_n}^{w^*}$.

Fix now $p\in \bigcap_n \overline{C_n}^{w^*}$. Since $p\in
\overline{C_n}^{w^*}$, there is a finite convex combination
$\sum_{i\in I_n} \theta_i z_n^i$ for which $\|p- \sum_{i\in I_n}
\theta_i z_n^i\|\leq n^{-1}$. This implies that
$p\in\overline{K}=K$ and thus $\bigcap_n
\overline{C_n}^{w^*}\subset K$.\end{proof}

This shows that Problem 1 in \cite{limit} has a negative answer.
On the other hand, Bernardes obtains in \cite{berna} an
affirmative answer when $X^*$ is separable, which is somehow the
best that can be expected. Let us briefly review and extend
Bernardes' result. Recall that a partially ordered set $\Gamma$ is
called filtering when for any two points $i,j\in \Gamma$ there is
$k\in \Gamma$ such that $i\leq k$ and $j\leq k$. An indexed family
of subsets $(C_\alpha)_{\alpha\in \Gamma}$ will be called
filtering when it is filtering with respect to the natural
(reverse) order; i.e., whenever $\alpha\leq \beta$ then $C_\beta
\subset C_\alpha$. One has:

\begin{proposition}\label{main} If $C$ is a convex weak*-compact set in the bidual
$X^{**}$ of a Banach space $X$ then there is a filtering family
$(C_\alpha)_{\alpha \in \Gamma}$ of  convex bounded and closed
subsets of $X$ such that
$$C = \bigcap_{\alpha\in \Gamma} \overline{C_\alpha}^{w^*}.$$
\end{proposition}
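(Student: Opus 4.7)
The natural strategy is to index the filtering family by a base of weak*-neighbourhoods of $C$ in $X^{**}$: pairs $\alpha=(F,\varepsilon)$ with $F\subset X^*$ finite and $\varepsilon>0$, ordered by $(F,\varepsilon)\le(F',\varepsilon')$ iff $F\subset F'$ and $\varepsilon\ge\varepsilon'$. This index set is filtering via unions and minima, and the monotonicity condition $\alpha\le\beta\Rightarrow C_\beta\subset C_\alpha$ required of the family is automatic once the $C_\alpha$ are built out of these parameters. Fix $n\ge\sup_{c\in C}\|c\|$, finite since $C$ is weak*-compact, and set
$$C_\alpha = \overline{\bigl\{x\in 2nB_X : \exists\,c\in C \text{ with } |x^*(x-c)|\le\varepsilon \text{ for all } x^*\in F\bigr\}}.$$
Convexity (for a convex combination of such $x$'s, use the matching convex combination of the witnesses from $C$), closedness and boundedness are immediate.

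The inclusion $C\subset\overline{C_\alpha}^{w^*}$ is where Goldstine's theorem enters: for each $c\in C$ there is a net in $\|c\|B_X\subset 2nB_X$ weak*-converging to $c$, which eventually lies in $C_\alpha$ (with $c$ itself playing the role of the witness), so $c\in\overline{C_\alpha}^{w^*}$. For the reverse inclusion $\bigcap_\alpha\overline{C_\alpha}^{w^*}\subset C$, take $x^{**}\notin C$. Hahn--Banach separation of the weak*-compact convex $C$ from the singleton $\{x^{**}\}$ in $(X^{**},w^*)$ produces $x^*\in X^*$ and $\delta>0$ with $x^*(x^{**})>\sup_{c\in C}x^*(c)+2\delta$. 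For $\alpha=(\{x^*\},\delta)$, every member of $C_\alpha$ satisfies $x^*(\cdot)\le\sup_C x^*+\delta$, an inequality preserved by the weak*-closure in $X^{**}$; therefore $x^{**}\notin\overline{C_\alpha}^{w^*}$.

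The technical obstacle is keeping the $C_\alpha$ \emph{bounded} while the Goldstine approximation inside a fixed ambient ball still reaches every $c\in C$. Fixing the ball $2nB_X$ at the outset resolves this uniformly, since a Goldstine net for any $c\in C$ can be chosen in $\|c\|B_X\subset 2nB_X$; any fixed radius strictly larger than $n$ would serve equally well. Without such uniform radius control the construction collapses to the unbounded half-spaces defining $C$ in $X^{**}$, which is exactly what the proposition forbids.
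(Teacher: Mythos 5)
Your proof is correct and follows essentially the same route as the paper's: index by finite subsets of $X^*$ (the paper ties the tolerance to the cardinality of the finite set instead of carrying $\varepsilon$ as a separate parameter), define $C_\alpha$ as the set of points of $X$ that approximate some element of $C$ on those finitely many functionals, and use Goldstine's theorem for the inclusion $C\subset\bigcap_\alpha\overline{C_\alpha}^{w^*}$. The only variations are cosmetic: for the reverse inclusion you invoke Hahn--Banach separation of $x^{**}$ from $C$ where the paper runs a direct weak*-neighbourhood estimate, and your explicit intersection with $2nB_X$ (plus taking the norm closure) cleanly secures the boundedness and closedness that the paper's definition of $C_\alpha$ leaves implicit.
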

\begin{proof} There is no loss of generality assuming that $C\subset B_{X^{**}}$. Let $\Gamma$ be the partially ordered set of finite
subsets of $B_{X^*}$. For each $\alpha\in \Gamma$ we denote
$|\alpha|$ the cardinal of the set $\alpha$. Set now

$$C_\alpha = \{ x\in X: \exists z\in C : \forall y\in \alpha:
|(z-x)(y)|\leq |\alpha|^{-1}\}.$$

This family $(C_\alpha)_{\alpha \in \Gamma}$ is filtering, as well
as $(\overline{C_\alpha}^{w^*})_{\alpha \in \Gamma}$, which
ensures that $\bigcap_{\alpha\in \Gamma}
\overline{C_\alpha}^{w^*}$ is nonempty. Let us show the equality
$$C = \bigcap_{\alpha\in \Gamma} \overline{C_\alpha}^{w^*}.$$

\begin{itemize}

\item $C\subset \bigcap_{\alpha\in \Gamma}
\overline{C_\alpha}^{w^*}$: Let $z\in C \subset B_{X^{**}}$; given
$\alpha\in \Gamma$, by the Banach-Alaoglu theorem, there is $x\in
B_X$ such that $|(z-x)(y)|<|\alpha|^{-1}$ for all $y\in \alpha$.
Hence $x\in C_\alpha$ and thus $z\in \overline{C_\alpha}^{w^*}$.

\item $\bigcap_{\alpha\in \Gamma} \overline{C_\alpha}^{w^*}
\subset C$: Let $z\in \bigcap_{\alpha\in
\Gamma}\overline{C_\alpha}^{w^*}$ and let $V_{\alpha,
\varepsilon}$ be the weak*-neighborhood of $0$ determined by
$\alpha\in \Gamma$ and $\varepsilon>0$; i.e., $V_{\alpha,
\varepsilon} = \{p\in X^{**} : \forall y\in \alpha: |p(y)|\leq
\varepsilon$. Pick $\beta\in \Gamma$ with $\alpha\leq \beta$ and
$|\beta|^{-1}\leq \varepsilon$. Since $z\in
\overline{C_\beta}^{w^*}$, there is $x\in C_\beta$ such that
$|(z-x)(y)|\leq  \varepsilon$ for all $y\in \alpha$; which
moreover means that there is some $z'\in C$ such that
$|(z'-x)(y)|\leq |\beta|^{-1} \leq \varepsilon $ for all $y\in
\beta$. Putting all together one gets that for $y\in \alpha$
$$|(z-z')(y)| =|(z-x)(y) +(x-z')(y)| \leq 2\varepsilon$$
and thus $z-z'\in V_{\alpha, 2\varepsilon}$. Hence $z\in
\overline{C}^{w^*} = C$.
\end{itemize}
\end{proof}

The size of $\Gamma$ can be reduced just taking first a dense
subset $Y\subset B_{X^*}$ and then fixing as $\Gamma$ a
fundamental family of finite sets of $Y$, in the sense that every
finite subset of $Y$ is contained in some element of $\Gamma$.
Such reduction modifies the proof as follows: from the first
finite set $\alpha$ --no longer in $\Gamma$-- determining
$V_{\alpha, \varepsilon}$ one must take a set $\beta\in \Gamma$
such that for each $y\in \alpha$ there is $y'\in \beta$ so that
$\|y-y'\|\leq |\beta|^{-1} \leq \varepsilon$. Get $x$ and $z'$ as
above. Finally, for $y\in \alpha$, one gets
$$|(z-z')(y)| =|(z-z')(y-y') +(z-z')(y')| \leq \varepsilon +2\varepsilon = 3\varepsilon.$$

The consequence of such simplification is that when $X^*$ is
separable then $\Gamma$ reduces to $\N$ and thus one gets the main
result in \cite{berna}:

\begin{cor}[Bernardes] Every Banach space with separable dual has CRP.
\end{cor}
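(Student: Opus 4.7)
The plan is to invoke Proposition \ref{main} in the simplified form indicated in the paragraph following its proof, with the index set $\Gamma$ chosen so that it is countable and linearly ordered, and then recast the resulting filtering family as a nested sequence in the usual sense of CRP.

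First I would use the separability of $X^*$ to pick a countable dense subset $Y = \{y_k : k \in \N\} \subset B_{X^*}$. Setting $\alpha_n = \{y_1,\dots,y_n\}$, the collection $\Gamma = \{\alpha_n : n \in \N\}$ is linearly ordered by inclusion and is \emph{fundamental}: every finite subset of $Y$ eventually lies in some $\alpha_n$. This is the crucial point where separability enters; for a general $X^*$ one cannot hope to reduce the directed set of finite subsets of $B_{X^*}$ to a countable chain.

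Next, given a weak*-compact convex $C \subset X^{**}$, I would take the sets $C_{\alpha_n}$ defined exactly as in the proof of Proposition \ref{main} (with the approximation tolerance $|\alpha_n|^{-1} = 1/n$). Because $\alpha_n \subset \alpha_{n+1}$ and $|\alpha_{n+1}|^{-1} \le |\alpha_n|^{-1}$, one checks directly from the definition that $C_{\alpha_{n+1}} \subset C_{\alpha_n}$, so $(C_{\alpha_n})$ is a genuine nested sequence of bounded closed convex subsets of $X$, not merely a filtering family. Thus the $\N$-indexed family is of the form required by the definition of CRP.

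Finally, I would verify the equality $C = \bigcap_{n} \overline{C_{\alpha_n}}^{w^*}$ by copying the two inclusions from Proposition \ref{main} with the single modification described in the paragraph right after its proof: given a basic weak*-neighbourhood $V_{\alpha,\varepsilon}$ determined by an arbitrary finite $\alpha \subset B_{X^*}$, one uses density of $Y$ to approximate each $y \in \alpha$ by some $y' \in \alpha_n$ with $\|y-y'\| \le 1/n \le \varepsilon$, and then proceeds as before to obtain $z-z' \in V_{\alpha,3\varepsilon}$, whence $z \in \overline{C}^{w^*} = C$. There is no real obstacle here; the only thing to be careful about is that $Y$ lies in $B_{X^*}$ so that the approximation $\|y-y'\|\le \varepsilon$ combines correctly with $\|z-z'\|\le 2$ (which is automatic since $C \subset B_{X^{**}}$). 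This yields CRP for $X$.
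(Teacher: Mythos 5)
Your proof is correct and follows essentially the same route as the paper: it specializes Proposition~\ref{main} by replacing the directed set of all finite subsets of $B_{X^*}$ with the countable chain $\alpha_n=\{y_1,\dots,y_n\}$ built from a dense sequence in $B_{X^*}$, and then reruns the second inclusion with the $3\varepsilon$ density adjustment, exactly as the paper indicates in the paragraph following that proposition. No further comment is needed.
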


One therefore has:$$ X^* \;\; \mathrm{separable} \Longrightarrow
CRP \Longrightarrow CCRP \Longleftrightarrow \ell_1 \nsubseteq X.
$$

This suggests two questions: 1) whether CCRP implies CRP and 2)
whether CRP implies having separable dual. One has

\begin{prop} CCRP does not imply CRP.
\end{prop}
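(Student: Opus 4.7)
The goal is to exhibit a separable Banach space $X$ with $\ell_1\nsubseteq X$ (so $X$ has CCRP by the preceding proposition) which nevertheless fails CRP. Since the Bernardes Corollary above delivers CRP whenever $X^*$ is separable, any candidate must have non-separable dual, and the natural choice is the James tree space $X=JT$: it is separable, does not contain $\ell_1$, yet $JT^*$ is non-separable. The classical decomposition $JT^{**}/JT\cong \ell_2(\Gamma)$ with $\Gamma$ uncountable, indexed by the branches of the dyadic tree, provides a rich reservoir of weak*-compact convex sets in $JT^{**}$ against which to test representability.

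The construction is then a weak*-compact convex weak*-metrizable set $K\subset JT^{**}$ whose description genuinely involves the non-separable part of the bidual. A natural candidate is the weak*-closed convex hull of a bounded countable family of elements of $JT^{**}\setminus JT$ concentrated on distinct branches: weak*-metrizability is forced by confining the relevant test functionals to a separable block of $JT^*$, while the branch-type elements of $K$ ensure the set interacts with uncountably many directions of $JT^*$, so that no single countable collection of bounded subsets of $JT$ can hope to witness all of $K$'s structure.

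To prove non-representability, suppose for contradiction that $K=\bigcap_n \overline{C_n}^{w^*}$ for a nested sequence $(C_n)$ of bounded convex closed subsets of $X$. Passing to support functions, this amounts to $h_K(x^*)=\inf_n h_{C_n}(x^*)$ for every $x^*\in X^*$, using the elementary fact that a decreasing intersection of weak*-compact convex sets has support function equal to the pointwise infimum of those of its members. Each $C_n$ is separable, so the combined data of the family $(C_n)$ is governed by a countable subfamily of $X^*$; a diagonal argument along the uncountable branch set of $JT$ then produces a branch ``missed'' by every $C_n$ together with a witnessing functional $x^*\in X^*$ at which the identity must fail, yielding a point in $\bigcap_n \overline{C_n}^{w^*}\setminus K$ and the desired contradiction. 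The main obstacle is engineering $K$ so that its ``branch content'' is rich enough to defeat every nested sequence of bounded convex sets in $JT$ while keeping $K$ weak*-metrizable: this requires a careful balance between visibility on a separable part of $JT^*$ and hidden complexity supported on the non-separable part, and it is at this step that the concrete choice of $K$, rather than the abstract strategy, does the real work.
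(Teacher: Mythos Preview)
Your instinct to use $JT$ and a weak*-metrizable convex set built from branch vectors is exactly right, and matches the paper's choice. But what you have written is a strategy, not a proof, and the strategy as stated has a real gap at the point you yourself flag.

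The problematic step is the sentence ``Each $C_n$ is separable, so the combined data of the family $(C_n)$ is governed by a countable subfamily of $X^*$; a diagonal argument along the uncountable branch set of $JT$ then produces a branch missed by every $C_n$.'' Separability of the $C_n$'s (they sit in the separable space $JT$) does not by itself single out any countable set of functionals that determines their weak*-closures; the weak*-closure of a subset of $JT$ is computed against \emph{all} of $JT^*$. So it is unclear what ``governed by a countable subfamily of $X^*$'' means operationally, and the promised diagonal argument never gets off the ground. The support-function identity $h_K=\inf_n h_{C_n}$ is correct, but you do not explain how to locate a functional where it fails.

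The paper's argument is quite different in mechanism. It fixes the concrete set $K=B_{\ell_2(S)}$, where $S$ is a countable set of branches touching every node, and first proves a preparatory lemma: if $\bigcap_n\overline{C_n}^{w^*}$ is weak*-metrizable, then every point of it is the weak*-limit of a sequence $(c_n)$ with $c_n\in C_n$. This gives, for each $s\in S$, a sequence $(x_n^s)\subset JT$ with $x_n^s\in C_n$ and $x_n^s\to e_s$ weak*. The heart of the proof is then an \emph{inductive construction inside $JT$}: one chooses nodes $(k_i,l_i)$ and vectors $x_i=x_{n_i}^{t_i}$ so that the nodes determine a branch $r\notin S$ and each $x_i$ is nearly supported between levels $k_{i-1}$ and $k_i$ along $r$, forcing $\langle f_r,x_i\rangle\to 1$. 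Any weak*-cluster point $x^{**}$ of $(x_i)$ (one exists since $JT$ is separable and $\ell_1\nsubseteq JT$) then lies in $\bigcap_n\overline{C_n}^{w^*}$ but satisfies $\langle f_r,x^{**}\rangle=1$, so $x^{**}\notin K$. The contradiction thus comes not from counting functionals, but from \emph{building a new branch out of the approximants themselves}. That constructive step is the real content, and it is absent from your outline.
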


To prove this we are going to show that the James-Tree space
--perhaps the simplest space not containing $\ell_1$ but having
nonseparable dual-- fails CRP. For information about $JT$, we
refer to \cite[Chapter VIII]{vanDulst}. We begin with a
preparatory lemma that can be considered as a complement to Kalton
\cite[Lemma 5.1]{kalt}.

\begin{lemma}\label{kalty} Let $(C_n)_n$ be a nested sequence of bounded closed
convex subsets of a Banach space $X$. If  $\bigcap_n
\overline{C_n}^{w^*}$ is weak*-metrizable then:
\begin{enumerate}
\item Every $g\in \bigcap_n \overline{C_n}^{w^*}$ is the
weak*-limit of a sequence $(c_n)$ with $c_n\in C_n$. \item Every
sequence $(c_n)$ with $c_n\in C_n$ admits a weak*-convergent
subsequence.
\end{enumerate}
\end{lemma}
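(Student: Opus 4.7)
Set $K = \bigcap_n \overline{C_n}^{w^*}$. My plan is to reduce weak*-metrizability of $K$ to the existence of a countable family $(y_k^*)\subset X^*$ that separates points of $K$. Since $K$ is weak*-compact and weak*-metrizable it is second countable, and any countable base can be arranged so that each basic open set is the trace on $K$ of a basic weak*-neighborhood in $X^{**}$ determined by a finite subset of $X^*$; the countable union of these finite sets produces $(y_k^*)$, and Hausdorffness of the weak*-topology on $K$ forces $(y_k^*)$ to separate $K$. A second preliminary observation is that for any sequence $c_n\in C_n$ the nestedness $C_m\subseteq C_n$ for $m\geq n$ forces every weak*-cluster point of $(c_n)$ to lie in $\overline{C_n}^{w^*}$ for every $n$, hence in $K$; and $(c_n)$ is bounded (contained in $C_1$) so cluster points exist by Banach--Alaoglu.

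For part (1), given $g\in K$, I would use $g\in\overline{C_n}^{w^*}$ to pick $c_n\in C_n$ with $|(c_n-g)(y_k^*)|<1/n$ for $k\leq n$. Any weak*-cluster point $g'$ of $(c_n)$ then satisfies $y_k^*(g')=y_k^*(g)$ for every $k$ and lies in $K$, so $g'=g$ by separation. A bounded sequence inside a weak*-compact set with a unique weak*-cluster point converges to that point, so $c_n\to g$ in the weak* topology.

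For part (2), the plan is to introduce on $\overline{C_1}^{w^*}$ the pseudometric
$$d(z,z')=\sum_{k}2^{-k}\min\bigl(1,|(z-z')(y_k^*)|\bigr),$$
whose topology is coarser than weak* since each summand is weak*-continuous. The weak*-compact set $\overline{C_1}^{w^*}$ is therefore $d$-compact, hence sequentially $d$-compact, and $(c_n)$ admits a $d$-convergent subsequence $(c_{n_j})$. Any two weak*-cluster points of $(c_{n_j})$ lie in $K$ by the nestedness observation and agree on every $y_k^*$ by $d$-convergence, hence coincide by separation; the cluster-point argument from (1) then upgrades this to genuine weak*-convergence.

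The delicate point is that in (2) the $d$-limit of $(c_{n_j})$ need not lie in $K$, so the separating property of $(y_k^*)$ cannot be invoked for the $d$-limit directly. The argument works because $(y_k^*)$ is used only to compare weak*-cluster points, and those are forced into $K$ by nestedness; in effect, $d$-convergence supplies the common coordinates and nestedness supplies the common container in which separation is valid.
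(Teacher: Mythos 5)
Your proof is correct. For part (1) it is essentially the paper's argument: metrizability gives a countable neighborhood basis of $g$ relative to $K=\bigcap_n \overline{C_n}^{w^*}$, one picks $c_n\in C_n$ in the $n$-th neighborhood, and the unique-cluster-point argument in a weak*-compact set finishes; your version merely makes the neighborhoods explicit through the countable separating family $(y_k^*)$. For part (2), however, you take a genuinely different route. The paper's proof is combinatorial: it maps each infinite $A\subset\N$ to the set of weak*-cluster points of $(c_n)_{n\in A}$, a compact subset of the metrizable set $K$, and invokes a stationarity result of Behrends (a Ramsey-type lemma from his work on Rosenthal's $\ell_1$ theorem) to find $M$ on which this map is constant, forcing the cluster set to be a singleton. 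You instead extract from metrizability a countable family $(y_k^*)\subset X^*$ separating points of $K$, pass to a subsequence along which all coordinates $y_k^*(c_{n_j})$ converge (your pseudometric is just a packaging of the diagonal argument), and then observe that all weak*-cluster points of that subsequence are forced into $K$ by nestedness and agree on every $y_k^*$, hence coincide. Your flagged "delicate point" is exactly right and correctly resolved: the $y_k^*$ separate only points of $K$, not of $\overline{C_1}^{w^*}$, so they may be applied only to cluster points, which nestedness places in $K$. What your approach buys is self-containedness -- no external Ramsey/stationarity machinery -- at the cost of having to justify that metrizability of $K$ yields a countable separating family of functionals (a standard second-countability argument, which you sketch adequately). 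Both proofs use metrizability in an essential and comparable way, so neither is more general than the other.
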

\begin{proof} (1) is clear: let $(V_n)_n$ be a sequence of
weak*-neighborhoods of $g$ such that $\{g\} = \cap V_n \cap
\bigcap_n \overline{C_n}^{w^*}$. Picking $c_n \in C_n \cap V_n$
one gets $\{g\}=\overline{\{c_n\}}^{w^*}$.

To prove (2), let us consider the equivalence relation on the set
$\mathcal P_\infty(\N)$ of infinite subsets of $\N$: $A\sim B$ if
and only if $A$ and $B$ coincide except for a finite set.
Moreover, $K$ will denote the set of all compact subsets of
$\bigcap_n \overline{C_n}^{w^*}$. Given a sequence $(c_n)$ with
$c_n\in C_n$ we define the following map $w: \mathcal
P_\infty(\N)/\sim \;\;\to K$:

$$w([A]) = \cap_k  \overline {\{c_n : n \in A, n > k\}}^{w^*}.$$ The set $\mathcal P_\infty(\N)/\sim$ admits a
natural order as $[A]\leq [B]$ if $A$ is eventually contained in
$B$. This order has the property that for every decreasing
sequence $([A_n])_n$ there is an element $[B]$ with $[B]\leq
[A_n]$ for all $n$. Since $\bigcap_n \overline{C_n}^{w^*}$ is
metrizable, it follows \cite[Sect. 2]{behrarch} that there is $M\in \mathcal
P_\infty (\N)$ on which $w$ is stationary; i.e., $w([C])=w([M])$
for all infinite subsets $C\subset M$. This immediately yields
that $w(\{c_n\}_{n\in M})$ has only one point, and thus
$\{c_n\}_{n\in M}$ is weak*-convergent. \end{proof}

\medskip

Let us denote by $G$ the set of all branches of the dyadic tree
$T$. For each $r\in G$, let $e_r$ denote the corresponding element
of the basis of $\ell_2(G)$ considered as a subspace of $JT^{**}$.
Let $\{e_{k,l} : k \in \N_0, 1 \leq l \leq 2^k\}$ denote the unit
vector basis of $JT$. The action of $e_r$ on $x^*\in JT^*$ is
given by
$$
\langle x^*,e_r\rangle = \lim_{\textrm{along $r$}} \langle
e_{k,l},x^*\rangle.
$$

For each $m\in \N$ we denote by $P_m$ the norm-one projection in
$JT$ defined by $P_m e_{k,l} = e_{k,l}$ if $k\geq m$, and $P_m e_{k,l} = 0$ otherwise. For each $r\in G$ we consider $f_r\in JT^*$ given
by $\langle e_{k,l},f_r\rangle$ equal to $1$ if $(k,l)\in r$, and
equal to $0$ otherwise. Observe that $\langle f_r,e_s\rangle =
\delta_{r,s}$. Let $S = \{s_n : n\in \N\}$ denote a countable
subset of $G$ such that the branches in $S$ include all the nodes
of the tree $T$.\\

\noindent \textbf{Proof of Proposition 2.} Let us show that the
closed unit ball $B$ of $\ell_2(S)$ cannot be represented. Assume
that we can write $B=\cap_{n\in\N} \overline{C_n}^{w^*}$. The set
$B$ is $w^*$-metrizable, because it is the unit ball of a
separable reflexive subspace. By Lemma \ref{kalty}, each vector in
$B$ is the $w^*$-limit of a sequence $(x_n)$ with $x_n \in C_n$.
For each $s \in S$ we select $x^s_n \in C_n$ so that $w^*$-$\lim
x^s_n = e_s$. Note that $\lim_{n}\|(I-P_k)x^s_n\| =0$ for every
$k$ and $s$.
\medskip

We take $t_1\in S$, $t_1\neq s_1$. Also we take $x_1 =
x^{t_1}_{n_1}$ with $|\langle x_1,f_{t_1}\rangle - 1|<2^{-1}$, and
select $(k_1,l_1)\in t_1\setminus s_1$ such that $\|P_{k_1}x_1\|<
2^{-1}$. Next we take $t_2\in S$ with $(k_1,l_1)\in t_2$ and
$t_2\neq s_2$. Also we take $x_2 = x^{t_2}_{n_2}$ with
$\|(I-P_{k_1})x_2\|<2^{-2}$ and $|\langle x_2,f_{t_2}\rangle -
1|<2^{-2}$, and select $(k_2,l_2)\in t_2\setminus s_2$ with $k_2>
k_1$ such that $\|P_{k_2}x_2\|< 2^{-2}$. Proceeding in this way we
obtain a sequence $(x_i)$ that is eventually contained in each
$C_n$ and an ordered sequence of different nodes $(k_i,l_i)$ that
determine a branch $r\in G\setminus S$. Since $JT$ is separable
and contains no copies of $\ell_1$, the sequence $(x_i)$ has a
subsequence that is $w^*$-convergent to some $x^{**}\in JT^{**}$
\cite[First Theorem in p. 215]{Diestel}.  Thus,
$x^{**}\in\cap_{n\in\N} \overline{C_n}^{w^*}$, but $x^{**}\notin
B$ since  $\langle f_r,x^{**}\rangle =1$. \hfill $\square$\\

Proposition 1 thus characterizes the CCRP, while Proposition 2
shows that even when compact convex sets are representable,
arbitrary weak*-metrizable convex bounded closed sets do not have
to be. The question of which convex sets are representable thus
arises. Bigger than compact spaces are the so called small sets
\cite{beka,small,arias}, but it was shown in \cite{beka} that a
closed bounded convex small set is compact.

\section{Representation of convex sets in the hyperspace}

The theory of types in Banach spaces represents the elements of a
Banach space $g\in X$ as functions $\tau_g(x)= \|x -g\|$. These
are the elementary types and the types are the closure of the set
of elementary types in $\R^X$. It can be shown that bidual types,
i.e., functions having the form $\tau_g(x)= \|x -g\|$ for $g\in
X^{**}$ are also types \cite{farm}. In close parallelism, the theory of  distance types was developed in
\cite{castpapi}: in it, the elements to be represented are the closed bounded convex
subsets $C$ of $X$ via the function $d_C(x)=\mathrm{dist}(x, C)$.
These are the elementary distance types. The $\emptyset$-distance
types are the functions of the form $d(x) = \lim d_{C_n}(x)$ where
$(C_n)$ is a nested sequence of closed bounded convex subsets of
$X$ with empty intersection. In \cite[Thm. 4.1]{castpapi} it was
shown the existence of $\emptyset$-distance types that are not
types in every nonreflexive separable Banach space. It was also
shown \cite[Thm. 5.1]{castpapi} that bidual types on separable
Banach spaces coincide with $\emptyset$-distance types defined by
"flat" (in the sense of Milman and Milman \cite{milmil}) nested
sequences of bounded convex closed sets $(C_n)$. In \cite[Thm.
1]{limit} it is shown that given a nested sequence $(C_n)$ of
bounded convex closed sets on a separable space $X$ one always has
$$\mathrm{dist}(x, \bigcap \overline{C_n}^{w^*}) = \lim \mathrm
{dist} (x, C_n).$$ While Bernardes shows in \cite[Thm. 1]{berna}
that that happens in all Banach spaces.

All this suggests the problem \cite[Problem 2]{limit} whether the
analogue of Farmaki's (bidual types are types) also holds for
distance types; i.e., if given a weak*-compact convex subset $C$ of
$X^{**}$, the \emph{bidual distance type} it defines $d_C(x)=\mathrm{dist}(x, C)$ on
$X$, is a $\emptyset$-distance type. Let us give an affirmative answer.

\begin{prop} Let $C$ be a weak*-compact convex subset of the
bidual $X^{**}$ of a separable space $X$ such that $C\cap
X=\emptyset$. There is a nested sequence $(C_n)$ of closed convex
sets of $X$ such that $C\subset \bigcap_n \overline{C_n}^{w^*}$
and for all $x\in X$
$$\mathrm{dist} (x, C) = \lim \mathrm{dist} (x, C_n).$$
\end{prop}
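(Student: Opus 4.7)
The plan is to construct $(C_n)$ as a nested sequence of bounded ``polyhedral'' sets in $X$, with level $n$ imposing finitely many approximate support conditions of $C$. The driver of the argument is the duality formula
$$\mathrm{dist}(x,C)\;=\;\sup_{f\in B_{X^*}}\bigl[f(x)-\sigma_C(f)\bigr],\qquad \sigma_C(f):=\sup_{z\in C}f(z),$$
valid for every $x\in X$. Indeed, since $C$ and $\delta B_{X^{**}}$ are weak*-compact, so is their Minkowski sum $C+\delta B_{X^{**}}$, which is therefore weak*-closed in $X^{**}$ and, for $\delta<\mathrm{dist}(x,C)$, fails to contain $x$. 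Hahn--Banach separation inside the locally convex space $(X^{**},w^*)$, whose dual is $X^*$, then provides $f\in B_{X^*}$ with $f(x)-\sigma_C(f)>\delta$.

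Fix a countable dense sequence $(x_k)\subset X$ and a radius $R>\sup_{z\in C}\|z\|$. Since $C\cap X=\emptyset$, we have $d_k:=\mathrm{dist}(x_k,C)>0$, and the duality formula yields $f_k\in B_{X^*}$ with $f_k(x_k)-\sigma_C(f_k)\ge d_k-\tfrac1k$ (take $f_k=0$ if $d_k\le\tfrac1k$). Set
$$C_n\;:=\;\bigl\{x\in RB_X : f_k(x)\le\sigma_C(f_k)+\tfrac1n,\ 1\le k\le n\bigr\}.$$
Each $C_n$ is a bounded closed convex subset of $X$, and tightening the tolerance from $1/n$ to $1/(n+1)$ while adding the $(n+1)$-th constraint gives $C_{n+1}\subset C_n$. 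The inclusion $C\subset\overline{C_n}^{w^*}$ follows from Goldstine's theorem: for $z\in C$ the weak*-open set $U=\{y\in X^{**}:f_k(y)<\sigma_C(f_k)+1/n,\ k\le n\}$ contains $z$ (as $f_k(z)\le\sigma_C(f_k)$), and its intersection with any weak*-neighborhood of $z$ meets $RB_X$ since $\|z\|<R$ and $RB_X$ is weak*-dense in $RB_{X^{**}}$.

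The distance identity then falls out in two steps. By Bernardes' theorem \cite[Thm.~1]{berna} applied to $(C_n)$,
$$\lim_n\mathrm{dist}(x,C_n)=\mathrm{dist}(x,D),\qquad D:=\bigcap_n\overline{C_n}^{w^*}\supset C,$$
so in particular $\mathrm{dist}(x,D)\le\mathrm{dist}(x,C)$. For the reverse inequality, the weak*-closed slice $\{y\in X^{**}:f_k(y)\le\sigma_C(f_k)+1/n\}$ contains $C_n$, and hence $\overline{C_n}^{w^*}$, for every $n\ge k$; letting $n\to\infty$ yields $f_k(z)\le\sigma_C(f_k)$ for every $z\in D$. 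Consequently
$$\|x_k-z\|\;\ge\;f_k(x_k-z)\;\ge\;d_k-\tfrac1k,$$
whence $|\mathrm{dist}(x_k,D)-\mathrm{dist}(x_k,C)|\le 1/k$. Since both distance functions are $1$-Lipschitz on $X$ and $(x_k)$ is dense in $X$, they must coincide on $X$, giving $\lim_n\mathrm{dist}(x,C_n)=\mathrm{dist}(x,C)$.

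The delicate point is the duality formula: one would naively only get the supremum over $B_{X^{***}}$. What rescues the argument is that both $C$ and $\delta B_{X^{**}}$ are weak*-compact (the latter by Banach--Alaoglu), so their sum is weak*-closed in $X^{**}$ and Hahn--Banach separation can be carried out inside the dual pairing $(X^{**},X^*)$; notably, no hypothesis on the separability of $X^*$ is required.
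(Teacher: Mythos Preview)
Your proof is correct and follows the same strategy as the paper: pick a dense sequence $(x_k)$ in $X$, use Hahn--Banach separation in $(X^{**},w^*)$ (exploiting the weak*-compactness of $C+\delta B_{X^{**}}$) to obtain functionals in $X^*$ that nearly realise $\mathrm{dist}(x_k,C)$, build the $C_n$ from the resulting linear constraints, and finish via the identity $\lim_n\mathrm{dist}(x,C_n)=\mathrm{dist}\bigl(x,\bigcap_n\overline{C_n}^{w^*}\bigr)$ from \cite{limit,berna}. Your execution is somewhat cleaner than the paper's: you use a single functional $f_k$ per dense point (with error $1/k$ absorbed at the end by the $1$-Lipschitz density argument) and plain half-space constraints $\{x:f_k(x)\le\sigma_C(f_k)+1/n\}$, whereas the paper attaches to each $x_k$ an entire sequence of separating functionals $(\varphi_n^k)_n$, defines two-sided ``slab'' sets $\{x:\exists z\in C,\ \max_i|(z-x)(\varphi_n^i)|\le\varepsilon_n^i\}$, and then diagonalises the resulting double-indexed family $(C_{n,k})$; your version avoids this bookkeeping entirely.
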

\begin{proof} Let $(x_n)$ be a dense sequence in $X$. Since $C$ is bounded, it is contained in
the ball $\gamma B_{X^{**}}$ for some $\gamma >0$. We proceed inductively: pick $x_1$,
let $\alpha_1=\mathrm{dist}(x_1, C)$, then set a monotone increasing
sequence $(\alpha_n^1)$ convergent to $\alpha_1$. Pick functionals
$\varphi_n^1\in B_{X^*}$ that strictly separate $C$ and
$x_1+(\alpha_n^1)B_{X^{**}}$, say $$\inf_{z\in C}z(\varphi_n^1) >
\|x_1\|+\alpha_n^1 + 2\varepsilon_n^1.$$ Set $C_{n,1} = \{ x\in X:
\exists z\in C :\; |(z - x)(\varphi_n^1)| \leq \varepsilon_n^1\}
\cap \gamma B_{X^{**}}$. The sequence of convex sets $C_{n,1}$ is
nested and every point $z\in C$ belongs to the weak*-closure of
some set $\{ x\in X:  |(z - x)(\varphi_n^1)| \leq n^{-1}\}$
which is in turn contained in $C_{n,1}$. Thus, $C \subset
\bigcap_n \overline{C_{n,1}}^{w^*}$. Moreover,
$x_1+(\alpha_1)B_{X^{**}} \cap \bigcap_n \overline{C_{n,1}}^{w^*}
=\emptyset$ because otherwise there should be elements $c_n\in
C_{n,1}$ for which $(x_1 + \alpha_1 b - c_n)(\varphi_n^1)<
\varepsilon_n^1$; since there must be $z_n\in C$ for which $|(z_n
- c_n)(\varphi_n^1)|\leq \varepsilon_n^1$, pick $z\in C$ a
weak*-accumulation point of $(z_n)$ to conclude that $(x_1 +
\alpha_1 b - z)(\varphi_n^1)= (x_1 + \alpha_1 b - c_n  + c_n -
z)(\varphi_n^1)\leq 2\varepsilon_n^1$ which immediately yields
$$z(\varphi_n^1) = (x_1 + \alpha_1 b)(\varphi_n^1) - (x_1 + \alpha_1 b - z)(\varphi_n^1) \leq \|x_1\|+\alpha_n^1 + 2\varepsilon_n^1$$
in contradiction with the separation above.

Thus, by \cite[Thm. 1]{limit} we get $\mathrm{dist} (x_1, C) =
\mathrm{dist}(x_1, \bigcap \overline{C_{n,1}}^{w^*}) = \lim
\mathrm{dist} (x_1, C_{n,1}).$

We pass to $x_2$. Everything goes as before except that all the
action is going to happen inside $\bigcap
\overline{C_{n,1}}^{w^*}$. Precisely, once $\alpha_2, \alpha_n^2,
\varphi_n^2, \varepsilon_n^2$ have been fixed by the same
procedure as above, set
$$C_{n,2} = \{ x\in X: \exists z\in C :\; \max_{i=1,2}|(z - x)(\varphi_n^i)| \leq
\varepsilon_n^i\} \cap \gamma B_{X^{**}}$$ to conclude that $C
\subset \bigcap_n \overline{C_{n,2}}^{w^*} \subset \bigcap_n
\overline{C_{n,1}}^{w^*}$ and $\mathrm{dist} (x_i, C) =
\mathrm{dist}(x_i, \bigcap \overline{C_{n,2}}^{w^*}) = \lim
\mathrm{dist} (x_i, C_{n,2})$ for $i=1,2$. Proceed inductively.
Since $C_{n,k+1}\subset C_{n,k}$, we can diagonalize the final
sequence of sequences to get the sequence $(C_{k,k})$, which
satisfies $C\subset \bigcap \overline{C_{k,k}}^{w^*}$ and,
moreover, for all $n$ one has
$$\mathrm{dist} (x_n, C) = \mathrm{dist}(x_i, \bigcap
\overline{C_{k,k}}^{w^*}) = \lim \mathrm{dist} (x_n, C_{k,k}).$$
By continuity, the equality remains valid for
 all $x\in X$.
\end{proof}

In the clasical case, as Farmaki remarks in \cite{farm}, it is not
obvious that \emph{fourth-dual} types, i.e., applications having
the form $\tau_g(x)=\|x +g\|$ for $g\in X^4$ on separable spaces
$X$ are necessarily types. One thus may ask: Let $X$ be a
separable Banach space and let $C \subset X^{2k}$ be a bounded
weak*-closed convex. Must there be a sequence $(C_n)$ of bounded
convex closed subset of $X$ such that for every $x\in X$ one has
$\mathrm{dist}(x, C) = \lim \mathrm{dist}(x, C_n)$?

\section{Further properties of nested sequences}

\subsection{Enlarging sets for better intersection: Marino's
problem} Let $A$ be a closed set. For $\varepsilon>0$ we set

$$A^\varepsilon = \{ x\in X : \mathrm{dist}(x, A)\leq
\varepsilon\}.$$

An extremely nice result of Marino \cite{mari} establishes that
given any family $(G_\gamma)$ of convex sets with nonempty
intersection then either $\bigcap_\gamma G_\gamma^\varepsilon$ is
bounded for every $\varepsilon>0$ or is unbounded for every
$\varepsilon>0$. A question left open in \cite[p.583]{limit} is
whether it is possible to have $\bigcap A_n = \emptyset$, some
intersections $\bigcap A_n^{\varepsilon}$ nonempty and bounded and
others unbounded. The next example shows it can be so:

\noindent \textbf{Example 2.} Consider in $\ell_1$ the sequence
$A_{2k} = \{x\in \ell_1: \;\; x_{k+1}\leq
-\frac{2^{k+1}-1}{2^{k+1}}\}$ and $A_{2k-1} = \{x\in \ell_1: \;\;
x_{k}\geq 1\}$. Then $\bigcap A_n =\emptyset = \bigcap
A_n^\varepsilon =\emptyset $ for all $\varepsilon<1$, while

$$\bigcap A_n^1 = \{x\in \ell_1:\;\; \forall k\;\; 0\leq x_k\leq \frac{1}{2^k}
\}$$

and $\bigcap A_n^{1+\varepsilon}$ is unbounded for all
$\varepsilon>0$ since all $x\in \ell_1$ with $-\varepsilon\leq
x_i\leq 0$ for every $i$ belong to that set.\\

The choice of $\ell_1$ for the example is not at random: during
the proof of \cite[Prop. 9]{limit} it is shown that in reflexive
spaces, $\bigcap A_n = \emptyset$ implies $\bigcap A_n^\varepsilon
= \emptyset$ for all $\varepsilon>0$. Marino's theorem in
combination with \cite[Prop. 9]{limit} yields that in a
non-reflexive space if $\alpha= \inf \{\varepsilon>0: \bigcap
A_n^\varepsilon \neq \emptyset \}$ then either $\bigcap
A_n^\varepsilon$ is bounded for all $\varepsilon>\alpha$ or
unbounded for all $\varepsilon>\alpha$.  Let us show now that
Marino's theorem remains ``almost" valid for nested sequences with
empty intersection in a finite dimensional space. In this case,
the boundedness of some $A_n$ immediately implies, by compactness,
that $\bigcap_{n\in \N} A_n\neq \emptyset$. Assume thus one has a
nested sequence of unbounded convex sets. Let $T_k =\{x\in X:
k\leq \|x\|\leq k+1\}$. One has

\begin{lemma} Let $(A_n)$ be a sequence of unbounded connected
sets in a finite dimensional space $X$. Then either $\bigcap_{n\in
\N} A_n=\emptyset$ or for all but finitely many $k\in \N$ and
every $\varepsilon>0$ there is an infinite subset $N_k\subset \N$
such that $T_k \cap \bigcap_{n\in N_k} A_n^\varepsilon\neq
\emptyset$.
\end{lemma}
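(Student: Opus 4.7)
Since the statement is a disjunction, I would dispose of the trivial alternative first: if $\bigcap_{n} A_n = \emptyset$ the conclusion is vacuously realised. All of the work lies in the complementary case, and my goal would be to produce, for every $k$ above a threshold that depends only on a single chosen common point and every $\varepsilon > 0$, an infinite $N_k \subset \N$ together with a point $x^\ast \in T_k \cap \bigcap_{n \in N_k} A_n^\varepsilon$.

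The first step would be to pick $a \in \bigcap_n A_n$ and exploit the interplay between connectedness and unboundedness. Because $A_n$ is connected and $\|\cdot\|$ is continuous, $\|A_n\| \subset \R$ is an interval; since $a \in A_n$ this interval contains $\|a\|$ and, as $A_n$ is unbounded, it is unbounded above, hence contains the whole half-line $[\|a\|,+\infty)$. In particular $A_n \cap T_k \neq \emptyset$ for every $n$ and every $k \geq \|a\|$, so the finitely many ``exceptional'' $k$'s in the statement are exactly the integers below $\|a\|$.

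The second step would be a compactness argument, which is the only place where finite-dimensionality plays a role. Fixing $k \geq \lceil \|a\| \rceil$, I would pick $x_n \in A_n \cap T_k$; since $T_k$ is closed and bounded, hence compact, some subsequence $(x_{n_j})$ would converge to a point $x^\ast \in T_k$. Given $\varepsilon>0$, for all $j$ large enough one has $\|x_{n_j}-x^\ast\|\leq \varepsilon$, so $x^\ast \in A_{n_j}^\varepsilon$, and the set $N_k := \{n_j : j \geq j_0(\varepsilon)\}$ would do the job.

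I do not anticipate a serious obstacle. The subtlety worth flagging is that the $N_k$ produced above depends on $\varepsilon$, which is consistent with the order of quantifiers in the statement; if a single $N_k$ working for every $\varepsilon$ simultaneously were wanted, one would have to diagonalise the extraction, but the lemma as written does not demand this. It is also worth observing that neither convexity nor nestedness of $(A_n)$ is actually used: connectedness together with unboundedness is all that is required, which is why the lemma is a clean ``almost'' version of Marino's theorem in finite dimensions.
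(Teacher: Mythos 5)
Your proof is correct and follows essentially the same route as the paper's: nonempty intersection plus connectedness and unboundedness give $A_n\cap T_k\neq\emptyset$ via the intermediate value property of the norm, and compactness of $T_k$ in finite dimensions yields the cluster point that serves all $n$ in an infinite subset. The only cosmetic difference is that you pick a common point $a$ directly where the paper phrases the dichotomy through the balls $kB$ meeting $\bigcap_n A_n$.
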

\begin{proof} If for every $k\in \N$ the ball $kB$ of radius $k$
does not intersect $\bigcap_{n\in \N} A_n$ then $\bigcap_{n\in \N}
A_n=\emptyset$. Otherwise, let $x_{n,k}\in A_n\cap kB$. Since
$A_n$ is unbounded, there is a point $y_{n, k+1}$ with $\|y_{n,
k+1}\|> k+1$. Since $A_n$ is connected, there is some
$x_{n,k+1}\in A_n$ with $k\leq\|x_{n,k+1}\|\leq k+1$, and thus in
$A_n\cap T_k$. The sequence $(x_{n,k+1})_n$ lies in the compact
set $T_k$ and thus for some infinite subset $N_k\subset \N$ the
subsequence $(x_{n,k+1})_{n\in N_k}$ is convergent to some point
$x_{k+1}\in T_k$. Thus, $x_{k+1} + \varepsilon B$ intersects the
sets $\{A_n: n\in N_k\}$ and thus $\bigcap_{n\in N_k}
A_n^\varepsilon \cap T_k\neq \emptyset$.
\end{proof}

Thus we get:

\begin{proposition} Let $(A_n)$ be a nested sequence of unbounded
connected sets  in a finite dimensional space $X$. Then either
$\bigcap_{n\in \N} A_n=\emptyset$ or $\bigcap_{n\in \N}
A_n^\varepsilon$ is unbounded for every $\varepsilon>0$.
\end{proposition}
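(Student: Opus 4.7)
The proposition is, at this point, essentially a bookkeeping consequence of the preceding Lemma. The only genuine input left to supply is that a decreasing family's intersection does not change if one restricts the index set to a cofinal subset. My plan is to apply the Lemma directly and then use nestedness to patch this small gap.

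Concretely, the argument goes as follows. Suppose $\bigcap_{n\in\N} A_n \neq \emptyset$, since otherwise we are in the first alternative of the dichotomy and there is nothing to prove. Fix an arbitrary $\varepsilon>0$. By the Lemma, there is an integer $k_0$ such that for every $k \geq k_0$ one can find an infinite subset $N_k \subset \N$ with
$$T_k \cap \bigcap_{n\in N_k} A_n^{\varepsilon} \neq \emptyset.$$
This already produces points of norm in $[k,k+1]$ sitting in the $\varepsilon$-enlargement of each $A_n$ for $n \in N_k$, but a priori not for every $n\in \N$.

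To close this gap, I would invoke nestedness. Since $A_{n+1}\subset A_n$ implies $A_{n+1}^{\varepsilon}\subset A_n^{\varepsilon}$, the family $(A_n^{\varepsilon})_n$ is decreasing. Any infinite subset $N_k$ of $\N$ is cofinal, so
$$\bigcap_{n\in N_k} A_n^{\varepsilon} \;=\; \bigcap_{n\in \N} A_n^{\varepsilon}.$$
Thus, for every $k\geq k_0$, the set $T_k \cap \bigcap_{n\in\N} A_n^{\varepsilon}$ is nonempty. Letting $k\to \infty$ yields points of arbitrarily large norm in $\bigcap_{n\in\N} A_n^{\varepsilon}$, proving unboundedness, and since $\varepsilon>0$ was arbitrary we obtain the claim.

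I do not foresee any real obstacle: all of the topological and geometric content (unboundedness plus connectedness forcing each $A_n$ to meet the annulus $T_k$, and finite-dimensional compactness producing a limit point there) has already been absorbed in the Lemma. The only conceptual step beyond invoking the Lemma is the elementary remark that for a decreasing sequence intersection along any infinite subset agrees with the full intersection, which is precisely where the hypothesis that the sequence $(A_n)$ is \emph{nested} (as opposed to merely a family with nonempty finite intersections) is used in an essential way.
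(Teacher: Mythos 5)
Your proof is correct and is precisely the derivation the paper intends: the Proposition is stated with no written proof beyond ``Thus we get,'' and the missing step is exactly the one you supply, namely that nestedness makes $(A_n^\varepsilon)_n$ decreasing, so the intersection over the cofinal set $N_k$ from the Lemma equals the full intersection. Nothing further is needed.
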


The assertion obviously fails for non-connected sets and also fails in infinite dimensional spaces:\\

\noindent \textbf{Example 1.} In $\ell_2$ take $A_n=\{x\in \ell_2:
\forall k>n,\;\; 0\leq x_k\leq 1 \;\;\mathrm{and}\;\; \forall
k\leq n, \;\; x_k=0\}$. This  is a nested sequence of unbounded
convex closed sets such that $\bigcap_{n\in \N} A_n=\emptyset$
while for all $\varepsilon>0$ the set $\bigcap_{n\in \N}
A_n^\varepsilon$ is bounded: indeed, if $y\in A_n^\varepsilon$ for
all $n$ then there is $x_n\in A_n$ for which $\|y - x_n\|\leq
\varepsilon$; thus $\sum_{i=1}^n |y_i|^2 \leq \varepsilon^2$ for
all $n$, so $\|y\|\leq \varepsilon^2$.

\subsection{On the  Helly-B\'ar\'any theorem}

In one of the main theorems in \cite{behr} Behrends establishes a
Helly-B\'ar\'any theorem for separable Banach spaces \cite[Thm.
5.5]{behr}: \emph{Let $X$ be a separable Banach space and
$\mathcal C_n$ a family of nonvoid, closed and convex subsets of
the unit ball $B$ for every $n$. Suppose that there is a positive
$\varepsilon_0 \leq 1$ such that $\bigcap_{C\in \mathcal C_n}
C+\varepsilon B =\emptyset$ for every $n$ and every
$0<\varepsilon<\varepsilon_0$. Then there are $C_n\in \mathcal
C_n$ such that $\bigcap_n C_n+ \varepsilon B =\emptyset$.}
Behrends asks \cite[Remark 2, p. 17]{behr} whether one can put
$\varepsilon=\varepsilon_0$ in the previous theorem. The following
example shows that the answer is no:\\

\noindent \textbf{Example.} In $c_0$, the family $\mathcal C_n$
contains two convex sets:

$$a_n^+ = \{x\in c_0: \forall i\in \N:\; |x_i|\leq \frac{1}{2}(1 +
\frac{1}{i})\;\mathrm{and}\; |x_n| = \frac{1}{2}(1 +
\frac{1}{n})\}$$ and

$$a_n^- = \{x\in c_0: \forall i\in \N:\; |x_i|\leq \frac{1}{2}(1 +
\frac{1}{i})\;\mathrm{and}\; |x_n| = -\frac{1}{2}(1 +
\frac{1}{n})\}.$$

One has $a_n^+ \cap a_n^- =\emptyset$ for all $n\in \N$. But for
every $z\in\{-,+\}^\N$ the choice $a_n^{z(n)}\in \mathcal C_n$ has
$x\in \bigcap_n a_n^{z(n)}\neq \emptyset$ for $x_i =
z(i)\frac{1}{2i}$.

\end{document}